\newtheorem{theorem}{Theorem}
\newtheorem{assumption}{Assumption}
\newtheorem{definition}[theorem]{Definition}
\newtheorem{lemma}[theorem]{Lemma}
\begin{document}

\title{Mixed $hp$ FEM for singularly perturbed fourth order boundary value
problems with two small parameters}
\author{C. Xenophontos$^{\ddagger ,}$\thanks{%
Corresponding Author. Email: xenophontos@ucy.ac.cy}, \hspace{0.01cm} S. Franz%
\thanks{%
Technische Universit\"{a}t Dresden, Institut f\"{u}r Wissenschaftliches
Rechnen, 01062 Dresden, Germany} \hspace{0.01cm} and I. Sykopetritou\thanks{%
Department of Mathematics and Statistics, University of Cyprus, P.O. Box
20537, 1678 Nicosia, Cyprus}. }
\maketitle

\begin{abstract}
We consider fourth order singularly perturbed boundary value problems with
two small parameters, and the approximation of their solution by the $hp$
version of the Finite Element Method on the {\emph{Spectral Boundary Layer}}
mesh from \cite{MXO}. We use a mixed formulation requiring only $C^{0}$
basis functions in two-dimensional smooth domains. Under the assumption of
analytic data, we show that the method converges uniformly, with respect to
both singular perturbation parameters, at an exponential rate when the error
is measured in the energy norm. Our theoretical findings are illustrated
through numerical examples, including results using a stronger (balanced)
norm.
\end{abstract}

\textbf{Keywords}: fourth order singularly perturbed problem; boundary
layers; mixed $hp$ finite element method; uniform, exponential convergence

\bigskip

\textbf{MSC2010}: 65N30

\section{Introduction}

\label{intro}

Fourth order singularly perturbed problems (SPPs) have not been studied as
much as their second order counter-parts for which there is considerable
literature (see, \cite{mos}, \cite{morton}, \cite{rst} and the references
therein). As is well known, SPPs give rise to \emph{boundary layers} in the
solution (for second order problems) and in its derivative (for fourth order
problems) \cite{OMalley}. If the problem contains two parameters, then
second order problems can become reaction-convection-diffusion, with two
layers of different scales at different parts of the boundary \cite{L}.
Fourth order SPPs with two parameters tend to give rise to boundary layers
in both the solution and its derivative (of different scales for each) \cite%
{OMalley}. The numerical scheme designed for the robust approximation of the
solution should take these phenomena into account and, in the context of
Finite Differences or Finite Elements, \emph{layer adapted} meshes should be
used (see \cite{L} and the references therein).

In this article, we consider a fourth order reaction-diffusion type SPP that
includes two different parameters $\varepsilon _{1}^{2}$ and $\varepsilon
_{2}^{2}$, multiplying the fourth and second derivatives, respectively. The
relationship between $\varepsilon _{1}$ and $\varepsilon _{2}$ determines
the type of problem we have; we focus on the case when $\varepsilon
_{1}<\varepsilon _{2}$ (see eq. (\ref{relation}) ahead) and we expect a
boundary layer of width $O\left( \varepsilon _{2}\right) $ in $u$ and one of
width $O\left( \frac{\varepsilon _{1}}{\varepsilon _{2}}\right)$ in the
derivative of $u$. We consider two-dimensional smooth domains (with the
boundary given by an analytic curve) and cast the problem in a \emph{mixed}
variational formulation. This allows for a $C^{0}$ discretization, which is
based on the \emph{Spectral Boundary Layer} mesh from \cite{MXO}. Under the
analytic regularity of the data assumption, we show that the method
converges uniformly and exponentially fast, when the error is measured in
the energy norm. Finally, we comment on the use of a stronger `balanced'
norm, as an error measure. All theoretical findings are illustrated by
numerical computations.

The rest of the paper is organized as follows: in Section \ref{2d} we
present the model problem and the assumptions we make regarding the
regularity of its solution. The variational formulation, the discretization
using the \emph{Spectral Boundary Layer} mesh and the proof of uniform,
exponential convergence are presented in Section \ref{hpFEM}. Finally,
Section \ref{nr} shows the results of numerical computations that illustrate
the theoretical findings.

With $\Omega \subset \mathbb{R}^{2},\ $a domain with boundary $\partial
\Omega $ and measure $\left\vert \Omega \right\vert $, we will denote by $%
C^{k}(\Omega )$ the space of continuous functions on $\Omega $ with
continuous derivatives up to order $k$. We will use the usual Sobolev spaces 
$W^{k,m}(\Omega )$ of functions on $\Omega $ with $0,1,2,...,k$ generalized
derivatives in $L^{m}\left( \Omega \right) $, equipped with the norm and
seminorm $\left\Vert \cdot \right\Vert _{k,m,\Omega }$ and $\left\vert \cdot
\right\vert _{k,m,\Omega }\,$, respectively. When $m=2$, we will write $%
H^{k}\left( \Omega \right) $ instead of $W^{k,2}\left( \Omega \right) $, and
for the norm and seminorm, we will write $\left\Vert \cdot \right\Vert
_{k,\Omega }$ and $\left\vert \cdot \right\vert _{k,\Omega }\,$,
respectively. The usual $L^{2}(\Omega )$ inner product will be denoted by $%
\left\langle \cdot ,\cdot \right\rangle _{\Omega }$, with the subscript
ommitted when there is no confusion. We will also use the space 
\begin{equation*}
H_{0}^{1}\left( \Omega \right) =\left\{ u\in H^{1}\left( \Omega \right)
:\left. u\right\vert _{\partial \Omega }=0\right\} .
\end{equation*}%
The norm of the space $L^{\infty }(\Omega )$ of essentially bounded
functions is denoted by $\Vert \cdot \Vert _{\infty ,\Omega }$. Finally, the
notation \textquotedblleft $a\lesssim b$\textquotedblright\ means
\textquotedblleft $a\leq Cb$\textquotedblright\ with $C$ being a generic
positive constant, independent of any discretization or singular
perturbation parameters.


\section{The model problem and its regularity\label{2d}}

\label{step2D}The regularity of the solution to any partial differential
equation (PDE) is governed by the data of the problem as well as the domain $%
\Omega $, where it is posed. If $\Omega $ contains corners, e.g. it is
polygonal, then the solution of singularly perturbed elliptic PDEs will
contain corner singularities, in addition to boundary layers (see \cite%
{melenk} and the references therein). The interplay of the two is a
complicated affair and has only been studied in a few special cases \cite%
{HanKellogg}, \cite{KelloggStynes}. If $\Omega $ is a smooth domain, then
only boundary layers will be present, but the discretization must include
(at least some) curved elements. Since $C^{1}$ curved elements are difficult
to construct, we use a mixed formulation which only requires a $C^{0}$
discretization.

We consider the following model problem: find $u$ such that 
\begin{eqnarray}
\varepsilon _{1}^{2}\Delta ^{2}u-\varepsilon _{2}^{2}\Delta u+c(x,y)u
&=&f(x,y)\text{ in }\Omega \subset \mathbb{R}^{2},  \label{pde} \\
u=\frac{\partial u}{\partial n} &=&0\text{ on }\partial \Omega .
\label{pde_bc}
\end{eqnarray}%
where $0<\varepsilon _{1},\varepsilon _{2}\leq 1$ are given parameters that
can approach zero and the functions $c,f$ are given and sufficiently smooth.
\ In particular, we assume that they are analytic functions satisfying, for
some positive constants $\gamma _{f},\gamma _{c}$, independent of $%
\varepsilon _{1},\varepsilon _{2},$%
\begin{equation}
\left\Vert \nabla ^{n}f\right\Vert _{\infty ,\Omega }\lesssim n!\gamma
_{f}^{n},\left\Vert \nabla ^{n}c\right\Vert _{\infty ,\Omega }\lesssim
n!\gamma _{c}^{n},\;\forall \;n\in \mathbb{N}_{0}.  \label{analytic2}
\end{equation}%
Here we have used the shorthand notation 
\begin{equation*}
\left\vert \nabla ^{n}f\right\vert ^{2}:=\sum_{\left\vert \alpha \right\vert
=n}\frac{\left\vert \alpha \right\vert !}{\alpha !}\left\vert D^{\alpha
}f\right\vert ^{2},
\end{equation*}%
with $D^{m}$ denoting differentiation of order $|m|$. In addition, we assume
that there exists a constant $\gamma $, independent of $\varepsilon
_{1},\varepsilon _{2},$ such that $\forall \;(x,y)\in \bar{\Omega}$ 
\begin{equation}
c(x,y)\geq \gamma >0,  \label{data2}
\end{equation}%
and that $\Omega $ is a smooth domain meaning that $\partial \Omega $ is an
analytic curve. We mention that problem (\ref{pde}), (\ref{pde_bc}) has been
studied in \cite{CFLX} (see also \cite{PC}) with $\varepsilon _{2}$ a fixed
constant, hence only one singular perturbation parameter. Here we will focus
on the case $\varepsilon _{1}<\varepsilon _{2},$ and in particular we assume%
\begin{equation}
\varepsilon _{1}\lesssim \varepsilon _{2}^{2}<<1,  \label{relation}
\end{equation}%
since in the complementary case we have a typical (`one parameter')
reaction-diffusion SPP.

We introduce a new unknown{\footnote{%
The fact that $w\in H^{2}(\Omega )$ is due to $\Omega $ being a smooth
domain.}} $w=\varepsilon _{1}\Delta u\in H^{2}(\Omega )$ and cast the
problem in the following \emph{mixed formulation}: find $\left( u,w\right) $
such that 
\begin{equation}
\left. 
\begin{array}{c}
\varepsilon _{1}\Delta u-w=0\text{ in }\Omega \\ 
\varepsilon _{1}\Delta w-\varepsilon _{2}^{2}\Delta u+cu=f\text{ in }\Omega
\\ 
u=\frac{\partial u}{\partial n}=0\text{ on }\partial \Omega%
\end{array}%
\right\} .  \label{mixed}
\end{equation}%
The variational formulation of (\ref{mixed}) reads: find $u\in
H_{0}^{1}(\Omega ),w\in H^{1}(\Omega )$ such that%
\begin{equation}
\mathcal{A}\left( (u,w),(\psi ,\varphi )\right) =\left\langle f,\psi
\right\rangle _{\Omega }\;\forall \;(\psi ,\varphi )\in H_{0}^{1}(\Omega
)\times H^{1}(\Omega ),  \label{AuwFw}
\end{equation}%
where%
\begin{equation}
\mathcal{A}\left( (u,w),(\psi ,\varphi )\right) =\left\langle cu,\psi
\right\rangle _{\Omega }+\varepsilon _{2}^{2}\left\langle \nabla u,\nabla
\psi \right\rangle _{\Omega }+\left\langle w,\phi \right\rangle _{\Omega
}+\varepsilon _{1}\left\langle \nabla u,\nabla \phi \right\rangle _{\Omega
}-\varepsilon _{1}\left\langle \nabla w,\nabla \psi \right\rangle _{\Omega }.
\label{Auw}
\end{equation}%
The bilinear form (\ref{Auw}) is \emph{coercive}, i.e.%
\begin{equation}
\mathcal{A}\left( (u,w),(u,w)\right) \gtrsim |||(u,w)|||^{2},
\label{coercive}
\end{equation}%
where the \emph{energy} norm is defined as%
\begin{equation}
|||(u,w)|||^{2}=\left\Vert u\right\Vert _{0}^{2}+\varepsilon
_{2}^{2}\left\Vert \nabla u\right\Vert _{0}^{2}+\left\Vert w\right\Vert
_{0}^{2},  \label{norm}
\end{equation}%
and is equivalent to the classical weighted $H^{2}$ norm:%
\begin{equation*}
|||(u,w)|||^{2}=|||(u,\varepsilon _{1}\Delta u)|||^{2}=\left\Vert
u\right\Vert _{0}^{2}+\varepsilon _{2}^{2}\left\Vert \nabla u\right\Vert
_{0}^{2}+\varepsilon _{1}^{2}\left\Vert \Delta u\right\Vert _{0}^{2}.
\end{equation*}

Since the domain is assumed to be smooth, \emph{boundary fitted} coordinates 
$(\rho ,\theta )$ are appropriate (see, e.g. \cite{AF}): Let $\left(
X(\theta ),Y(\theta )\right) ,\theta \in \lbrack 0,L]$ be a parametrization
of $\partial \Omega $ by arclength and let $\Omega _{0}$ be a tubular
neighborhood of $\partial \Omega $ in $\Omega $. For each point $z=(x,y)\in
\Omega _{0}$ there is a unique nearest point $z_{0}\in \partial \Omega $, so
with $\theta $ the arclength parameter (with counterclockwise orientation),
we set $\rho =\left\vert z-z_{0}\right\vert $ which measures the distance
from the point $z$ to $\partial \Omega .$ Explicitely, 
\begin{equation}
\Omega _{0}=\left\{ z-\rho \overrightarrow{n}_{z}:z\in \partial \Omega
,0<\rho <\rho _{0}<{\text{min. radius of curvature of }}\partial \Omega
\right\} ,  \label{Omega0}
\end{equation}%
where $\overrightarrow{n}_{z}$ is the outward unit normal at $z\in \partial
\Omega $, and 
\begin{equation*}
x=X(\theta )-\rho Y^{\prime }(\theta ),y=Y(\theta )+\rho X^{\prime }(\theta
),
\end{equation*}%
with $\rho \in (0,\rho _{0}),\theta \in (0,L).$ The determinant of the
Jacobian matrix of the transformation is given by $J=1-\kappa (\theta )\rho $%
, where $\kappa (\theta )$ is the curvature of $\partial \Omega $ (see \cite%
{AF} for more details). Since we assume that $\partial \Omega $ is an
analytic curve, we have $X^{(k)}(\theta ),Y^{(k)}(\theta )\lesssim
1\;\forall \;k=0,1,2,...$, as well as $J,J^{-1}\lesssim 1.$ Thus for a
function $v(x,y)$ defined in $\Omega _{0}$, the above change of variables
produces 
\begin{equation*}
v(x,y)=v\left( X(\theta )-\rho Y^{\prime }(\theta ),Y(\theta )+\rho
X^{\prime }(\theta )\right) ,
\end{equation*}
as well as%
\begin{equation*}
\frac{\partial v}{\partial x}=\frac{1}{1-\kappa (\theta )\rho }\left\{ \frac{%
\partial v}{\partial \theta }X^{\prime }(\theta )-\frac{\partial v}{\partial
\rho }\left( Y^{\prime }(\theta )+\rho X^{\prime \prime }(\theta )\right)
\right\} ,
\end{equation*}%
\begin{equation*}
\frac{\partial v}{\partial y}=\frac{1}{1-\kappa (\theta )\rho }\left\{ \frac{%
\partial v}{\partial \rho }\left( X^{\prime }(\theta )-\rho Y^{\prime \prime
}(\theta )\right) +\frac{\partial v}{\partial \theta }Y^{\prime }(\theta
)\right\} .
\end{equation*}%
This shows that the first derivatives with respect to the (physical) $x,y$
variables are bounded by the first derivatives with respect to the $\rho
,\theta $ variables.

In \cite{OMalley} it was shown that the solution to two parameter singularly
perturbed fourth order problems may be decomposed into a smooth part,
boundary layers and a remainder. Derivative estimates for each term in the
decomposition are also given, up to a fixed, low order. The analytic
regularity of the solution is beyond the scope of this article and will
appear elsewhere. Here we make the following assumption, which is in line
with \cite{OMalley} and \cite{BFR}.

\begin{assumption}
\label{assumption2D} Let $(u,w)$ be the solution to (\ref{AuwFw}), and
assume (\ref{analytic2}) holds. Then, there exists a positive constant $K$
such that for all $(x,y)\in \bar{\Omega}$,%
\begin{equation}
\left\vert \frac{\partial ^{n+m}u}{\partial x^{n}\partial y^{m}}\right\vert
\lesssim K^{n+m}\max \left\{ (n+m)^{n+m},\left( \frac{\varepsilon _{1}}{%
\varepsilon _{2}}\right) ^{1-n-m}\right\} ,\;  \label{B1}
\end{equation}%
Moreover, $u$ and $w$ may be decomposed as%
\begin{equation}
u=S+E_{1}+E_{2}+R,w=\tilde{S}+\tilde{E}_{1}+\tilde{E}_{2}+\tilde{R},
\label{B2}
\end{equation}%
and, there exist constants $K_{0},\tilde{K}_{0},K_{1},\tilde{K}_{1},K_{2},%
\tilde{K}_{2},\delta >0$ such that for all $(x,y)\in \bar{\Omega},$ 
\begin{equation}
\left\vert \frac{\partial ^{n+m}S}{\partial x^{n}\partial y^{m}}\right\vert
\lesssim K_{0}^{n+m}(n+m)^{n+m},\left\vert \frac{\partial ^{n+m}\tilde{S}}{%
\partial x^{n}\partial y^{m}}\right\vert \lesssim \tilde{K}%
_{0}^{n+m}(n+m)^{n+m},  \label{B3}
\end{equation}%
\begin{equation}
\left\vert \frac{\partial ^{n+m}E_{1}(\rho ,\theta )}{\partial \rho
^{n}\partial \theta ^{m}}\right\vert \lesssim K_{1}^{n+m}\frac{1}{%
\varepsilon _{2}}\left( \frac{\varepsilon _{1}}{\varepsilon _{2}}\right)
^{1-n}e^{-\varepsilon _{2}\rho /\varepsilon _{1}}\;,\;\left\vert \frac{%
\partial ^{n+m}E_{2}(\rho ,\theta )}{\partial \rho ^{n}\partial \theta ^{m}}%
\right\vert \lesssim K_{2}^{n+m}\varepsilon _{2}^{-n}e^{-\rho /\varepsilon
_{2}},  \label{B4}
\end{equation}%
\begin{equation}
\left\vert \frac{\partial ^{n+m}\tilde{E}_{1}(\rho ,\theta )}{\partial \rho
^{n}\partial \theta ^{m}}\right\vert \lesssim \tilde{K}_{1}^{n+m}\left( 
\frac{\varepsilon _{1}}{\varepsilon _{2}}\right) ^{-n}e^{-\varepsilon
_{2}\rho /\varepsilon _{1}}\;,\;\left\vert \frac{\partial ^{n+m}\tilde{E}%
_{2}(\rho ,\theta )}{\partial \rho ^{n}\partial \theta ^{m}}\right\vert
\lesssim \tilde{K}_{2}^{n+m}\varepsilon _{2}^{-n}e^{-\rho /\varepsilon _{2}},
\label{B5}
\end{equation}%
\begin{equation}
|||(R,\tilde{R})|||\lesssim e^{-\delta \varepsilon _{1}/\varepsilon
_{2}}\;,\;\delta \in \mathbb{R}^{+},  \label{B6}
\end{equation}%
In (\ref{B2}), $S,\tilde{S}$ correspond to the smooth parts, $E_{1},E_{2}$
correspond to the boundary layers in $u$, $\tilde{E}_{2},\tilde{E}_{3}$
correspond to the boundary layers in $w$, and $R,\tilde{R}$ are the
remainders.
\end{assumption}

Equations (\ref{B4}), (\ref{B5}) are identical to those found in \cite%
{OMalley}; the difference lies in (\ref{B1}) and (\ref{B3}) in that we
assume the smooth parts behave like analytic functions (since we assumed the
data is analytic) as opposed to simply having bounded derivatives in an
unspecified way. The proof of (\ref{B1}), (\ref{B3}) is open.

\section{Discretization by a mixed $hp$-FEM\label{hpFEM}}

The discrete version of (\ref{AuwFw}) reads: find $u_{N}\in V_{1}^{N}\subset
H_{0}^{1}(\Omega ),w_{N}\in V_{2}^{N}\subset H^{1}(\Omega )$ such that 
\begin{equation}
\mathcal{A}\left( (u_{N},w_{N}),(\psi ,\varphi )\right) =\left\langle f,\psi
\right\rangle _{\Omega }\;\forall \;(\psi ,\varphi )\in V_{1}^{N}\times
V_{2}^{N}  \label{discrete2}
\end{equation}%
where $V_{1}^{N},V_{2}^{N}$ are finite dimensional spaces to be defined
shortly. Galerkin orthogonality holds:%
\begin{equation*}
\mathcal{A}\left( (u-u_{N},w-w_{N}),(\psi ,\varphi )\right) =0\;\forall
\;(\psi ,\varphi )\in V_{1}^{N}\times V_{2}^{N}.
\end{equation*}%
In order to define the spaces $V_{1}^{N},V_{2}^{N}$, we let $\Delta =\left\{
\Omega _{i}\right\} _{i=1}^{N}$ be a mesh consinsting of curvilinear
quadrilaterals, subject to the usual conditions (see, e.g. \cite{MS}) and
associate with each $\Omega _{i}$ a bijective mapping $M_{i}:\hat{\Omega}%
\rightarrow \overline{\Omega }_{i}$, where $\hat{\Omega}=[0,1]^{2}$ denotes
the reference square. With $Q_{p}(\hat{\Omega})$ the space of polynomials of
degree $p$ (in each variable) on $\hat{\Omega}$, we define 
\begin{eqnarray*}
\mathcal{S}^{p}(\Delta ) &=&\left\{ u\in H^{1}\left( \Omega \right) :\left.
u\right\vert _{\Omega _{i}}\circ M_{i}\in Q_{p}(\hat{\Omega}),\quad
i=1,...,N\right\} , \\
\mathcal{S}_{0}^{p}(\Delta ) &=&\mathcal{S}^{p}(\Delta )\cap
H_{0}^{1}(\Omega ).
\end{eqnarray*}%
We then take $V_{1}^{N}=\mathcal{S}_{0}^{p}(\Delta ),V_{2}^{N}=\mathcal{S}%
^{p}(\Delta )$, with the mesh $\Delta $ chosen following the construction in 
\cite{MS,MXO}: we begin with a \emph{fixed} (asymptotic) mesh $\Delta _{A}$,
consisting of curvilinear quadrilateral elements $\Omega _{i}$, $i=1,\ldots
,N_{1}$, which are the images of the reference square $\hat{\Omega}$ under
the element mappings $M_{A,i}$, $i=1,\ldots ,N_{1}\in \mathbb{N}$ (the
subscript $A$ stands for asymptotic). They are assumed to satisfy conditions
(M1)--(M3) of \cite{MS} in order for the space ${\mathcal{S}}^{p}(\Delta )$
to have the necessary approximation properties. Moreover, the element
mapings $M_{A,i}$ are assumed to be analytic (with analytic inverse). We
also assume that the elements do not have a single vertex on the boundary $%
\partial \Omega $ but only complete, single edges. For convenience, we
number the elements along the boundary first, i.e., $\Omega _{i}$, $%
i=1,\ldots ,N_{2}<N_{1}$ for some $N_{2}\in \mathbb{N}$. We next give the
definition of the two-dimensional \emph{Spectral Boundary Layer Mesh} $%
\Delta _{BL}=\Delta _{BL}(\kappa ,p)$.

\begin{definition}[Spectral Boundary Layer mesh $\Delta _{BL}(\protect\kappa %
,p)$]
\label{SBL-2D}\cite{MXO} Given parameters $\kappa >0$, $p\in \mathbb{N}$, $%
\varepsilon \in (0,1]$ and the (asymptotic) mesh $\Delta _{A}$, the Spectral
Boundary Layer mesh $\Delta _{BL}(\kappa ,p)$ is defined as follows:

\begin{enumerate}
\item If $\kappa p\frac{\varepsilon _{1}}{\varepsilon _{2}}\geq 1/2$ then we
are in the asymptotic range of p and we use the mesh $\Delta _{A}$.

\item If $\kappa p\frac{\varepsilon _{1}}{\varepsilon _{2}}<1/2$, we need to
define so-called boundary layer elements. We do so by splitting the elements 
$\Omega _{i},i=1,\ldots ,N_{2}$ into three elements $\Omega
_{i}^{BL_{1}},\Omega _{i}^{BL_{2}}$ and $\Omega _{i}^{reg}.$ To this end,
split the reference square $\hat{\Omega}$ into three elements 
\begin{equation*}
\hat{\Omega}^{BL_{1}}=\left[ 0,\kappa p\frac{\varepsilon _{1}}{\varepsilon
_{2}}\right] \times \lbrack 0,1]\;,\;\hat{\Omega}^{BL_{2}}=\left[ \kappa p%
\frac{\varepsilon _{1}}{\varepsilon _{2}},\kappa p\varepsilon _{2}\right]
\times \lbrack 0,1]\;,\;\hat{\Omega}^{reg}=\left[ \kappa p\varepsilon _{2},1%
\right] \times \lbrack 0,1],
\end{equation*}%
and define the elements $\Omega _{i}^{BL_{1}}$, $\Omega _{i}^{BL_{1}}$, $%
\Omega _{i}^{reg}$ as the images of these three elements under the element
map $M_{A,i}$ and the corresponding element maps as the concatination of the
affine maps 
\begin{align*}
& A^{BL_{1}}:\hat{\Omega}\rightarrow \hat{\Omega}^{BL_{1}},\qquad (\xi ,\eta
)\rightarrow \left( \kappa p\frac{\varepsilon _{1}}{\varepsilon _{2}}\xi
,\eta \right) , \\
A^{BL_{2}}& :\hat{\Omega}\rightarrow \hat{\Omega}^{BL_{2}},\qquad (\xi ,\eta
)\rightarrow \left( \kappa p(\frac{\varepsilon _{1}}{\varepsilon _{2}}%
-\varepsilon _{2})\xi ,\eta \right) , \\
& A^{reg}:\hat{\Omega}\rightarrow \hat{\Omega}^{reg},\qquad (\xi ,\eta
)\rightarrow (\kappa p\varepsilon _{2}+(1-\kappa p\varepsilon _{2})\xi ,\eta
)
\end{align*}%
with the element map $M_{A,i}$, i.e., $M_{i}^{BL_{k}}=M_{A,i}\circ
A^{BL_{k}},k=1,2$ and $M_{i}^{reg}=M_{A,i}\circ A^{reg}$.
\end{enumerate}
\end{definition}

In total, the mesh $\Delta _{BL}(\kappa ,p)$ consists of $N=N_{1}+2N_{2}$
elements if $\kappa p\frac{\varepsilon _{1}}{\varepsilon _{2}}<1/2$. By
construction, the resulting mesh 
\begin{equation*}
\Delta _{BL}=\Delta _{BL}(\kappa ,p)=\left\{ \Omega _{1}^{BL_{1}},...,\Omega
_{N_{2}}^{BL_{1}},\Omega _{1}^{BL_{2}},...,\Omega _{N_{2}}^{BL_{2}},\Omega
_{1}^{reg},...,\Omega _{N_{1}}^{reg},\Omega _{N_{1}+1},...,\Omega
_{N}\right\}
\end{equation*}%
is a regular admissible mesh in the sense of \cite{MS}.

In Figure 1 we show an example of such a mesh construction on the unit
circle.

\vspace{1cm}

\begin{figure}[th]
\label{SBLM0}
\par
\begin{center}
\includegraphics[width=0.4\textwidth]{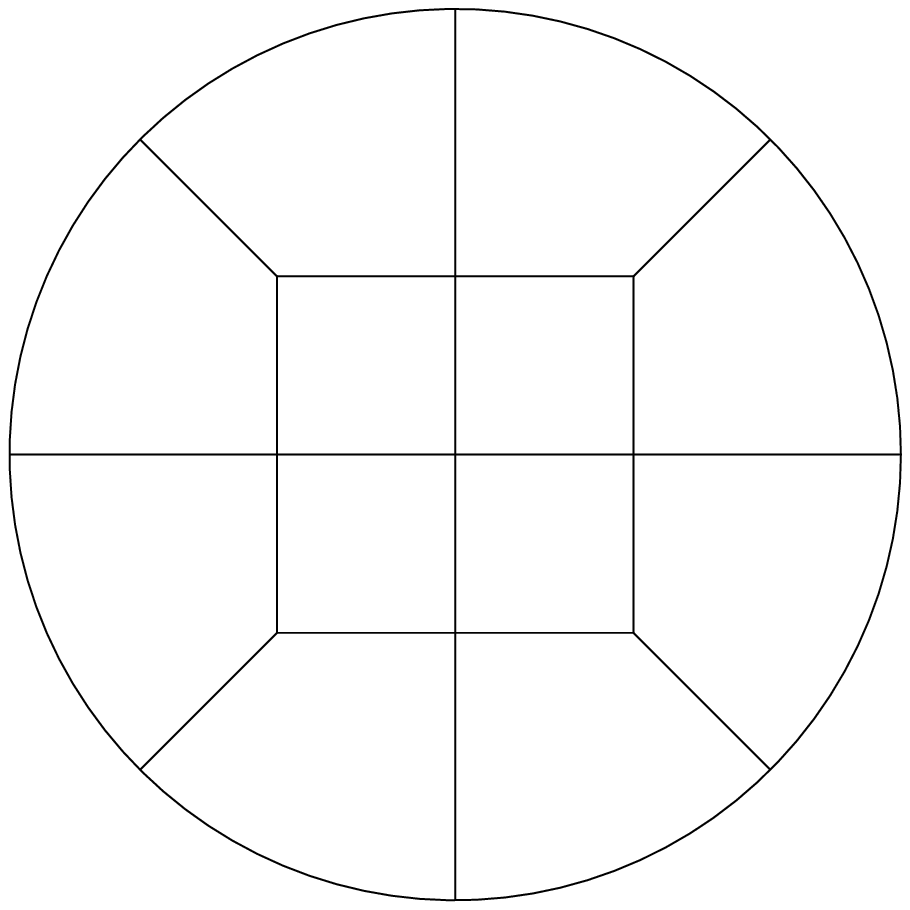} \hspace{2cm} %
\includegraphics[width=0.4\textwidth]{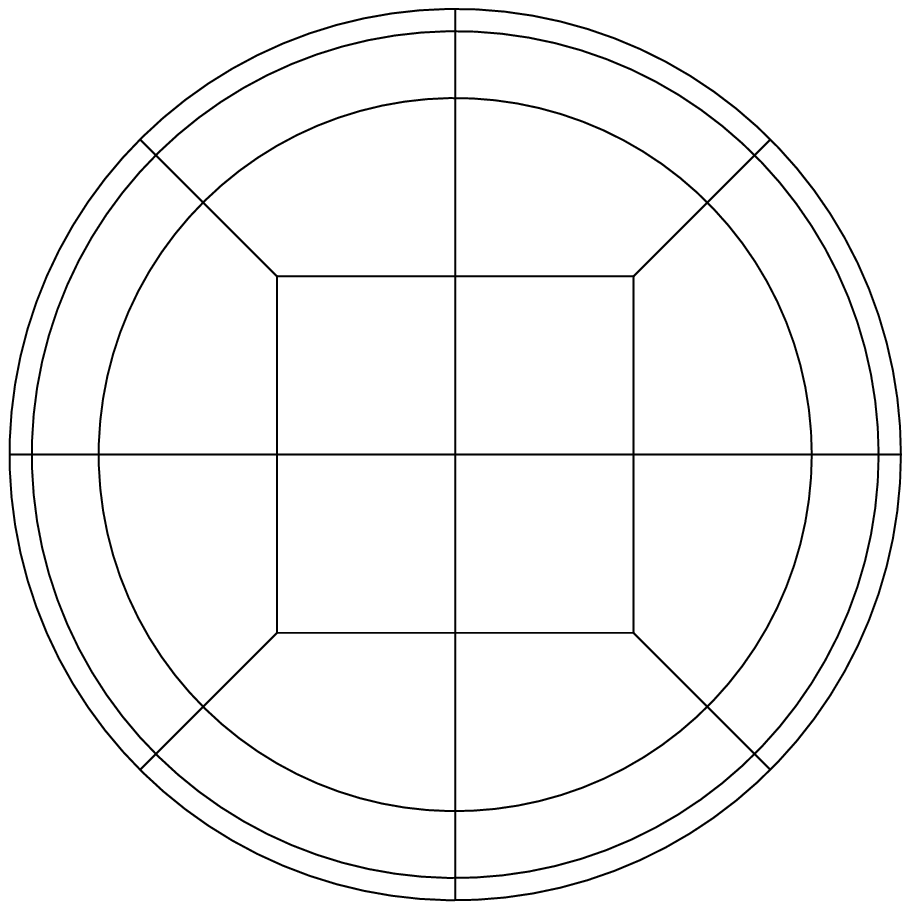}
\end{center}
\par
\begin{picture}(10,10) 
\put(100,220){$\Delta_A$}
\put(340,220){$\Delta_{BL}$}
\put(125,180){$\Omega_1$}
\put(360,170){$\Omega^{reg}_1$}
\put(390,195){$\Omega^{BL_1}_1$}
\put(360,190){$\Omega^{BL_2}_1$}
\put(160,140){$\Omega_2$}
\put(395,130){$\Omega^{reg}_2$}
\put(427,155){$\Omega^{BL_1}_2$}
\put(400,160){$\Omega^{BL_2}_2$}
\put(160,100){$\Omega_3$}
\put(393,100){$\Omega^{reg}_3$}
\put(435,95){$\Omega^{BL_1}_3$}
\put(413,80){$\Omega^{BL_2}_3$}
\put(70,180){$\Omega_{N_1}$}
\put(317,170){$\Omega^{reg}_{N_1}$}
\put(316,205){$\Omega^{BL_1}_{N_1}$}
\put(310,185){$\Omega^{BL_2}_{N_1}$}
\end{picture}
\caption{Example of an admissible mesh. Left: asymptotic mesh $\Delta _{A}$.
Right: boundary layer mesh $\Delta _{BL}$.}
\end{figure}

\subsection{Error Estimates\label{estimates2}}

Our approximation is based on the (element-wise) Gau\ss -Lobatto interpolant
from \cite[Prop. 3.11]{MS}. We have the following.

\begin{lemma}
\label{2Dapprox} Let $\left( u,w\right) $ be the solution to (\ref{mixed})
and assume that (\ref{analytic2}) holds. Then there exist constants $\kappa
_{0}$, $\kappa _{1}$, $C$, $\tilde{\beta}>0$ independent of $\varepsilon \in
(0,1]$ and $p\in \mathbb{N}$, such that the following is true: For every $p$
and every $\kappa \in (0,\kappa _{0}]$ with $\kappa p\geq \kappa _{1},$
there exist $\pi _{p}u\in \mathcal{S}_{0}^{{p}}(\Delta _{BL}(\kappa ,p)),\pi
_{p}w\in \mathcal{S}^{{p}}(\Delta _{BL}(\kappa ,p))$ such that%
\begin{equation*}
\max \left\{ \left\Vert u-\pi _{p}u\right\Vert _{\infty ,\Omega },\left\Vert
w-\pi _{p}w\right\Vert _{\infty ,\Omega }\right\} \lesssim e^{-\tilde{\beta}%
p},
\end{equation*}%
\begin{equation*}
\max \left\{ \varepsilon _{2}^{1/2}\left\Vert \nabla (u-\pi
_{p}u)\right\Vert _{0,\Omega },\left( \frac{\varepsilon _{1}}{\varepsilon
_{2}}\right) ^{1/2}\left\Vert \nabla (w-\pi _{p}w)\right\Vert _{0,\Omega
}\right\} \lesssim e^{-\tilde{\beta}p},
\end{equation*}%
provided Assumption \ref{assumption2D} holds.
\end{lemma}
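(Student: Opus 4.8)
The plan is to use the linearity of the element-wise Gau\ss--Lobatto interpolation operator $\pi _p$ of \cite[Prop. 3.11]{MS} together with the decomposition (\ref{B2}). Writing $u-\pi _p u=(S-\pi _p S)+(E_1-\pi _p E_1)+(E_2-\pi _p E_2)+(R-\pi _p R)$ and analogously for $w$, the triangle inequality reduces the lemma to bounding the interpolation error of each of the eight pieces separately, in each of the three target quantities: the $L^\infty $ error, the $\varepsilon _2^{1/2}$-weighted $H^1$ seminorm of the $u$-error, and the $(\varepsilon _1/\varepsilon _2)^{1/2}$-weighted seminorm of the $w$-error. I would organize the argument according to the two regimes of Definition \ref{SBL-2D}: the asymptotic regime $\kappa p\,\varepsilon _1/\varepsilon _2\geq 1/2$, where the fine layer scale $\varepsilon _1/\varepsilon _2$ is already at least $1/(2\kappa p)$ and the fixed mesh $\Delta _A$ resolves everything, and the boundary-layer regime $\kappa p\,\varepsilon _1/\varepsilon _2<1/2$, where the split elements $\Omega _i^{BL_1},\Omega _i^{BL_2},\Omega _i^{reg}$ are used.

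For the smooth parts, the analytic-type bounds (\ref{B3}) together with the analyticity of the element maps $M_{A,i}$ place $S$ and $\tilde S$ squarely in the classical $hp$ setting on a fixed mesh, so that \cite[Prop. 3.11]{MS} gives $\|S-\pi _p S\|_{\infty ,\Omega }+\|\nabla (S-\pi _p S)\|_{0,\Omega }\lesssim e^{-bp}$ (and likewise for $\tilde S$) with rate independent of the parameters; since $\varepsilon _2^{1/2}\leq 1$ and $(\varepsilon _1/\varepsilon _2)^{1/2}\leq 1$, the weights only improve these bounds.

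The boundary layers are the crux of the proof. In boundary-fitted coordinates $(\rho ,\theta )$, and modulo the analytic factors coming from $J,J^{-1}$ and from the analytic dependence of the layers on $\theta $, each layer is a tensor product of an analytic function of $\theta $ with a one-dimensional boundary-layer profile $e^{-\rho /d}$, where $d=\varepsilon _1/\varepsilon _2$ for $E_1,\tilde E_1$ and $d=\varepsilon _2$ for $E_2,\tilde E_2$. The mesh is designed precisely so that $\Omega _i^{BL_1}$ has width $\sim \kappa p\,\varepsilon _1/\varepsilon _2=\kappa p\cdot d$ for the fine scale and $\Omega _i^{BL_2}$ reaches $\sim \kappa p\,\varepsilon _2=\kappa p\cdot d$ for the coarse scale. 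On the element matched to a layer's scale I would apply the one-dimensional boundary-layer approximation estimate underlying \cite{MS,MXO} (a tensor product of the analytic estimate in $\theta $ with the scaled layer estimate in $\rho $), which yields a contribution $\lesssim e^{-\tilde \beta p}$ as soon as $\kappa \leq \kappa _0$ and $\kappa p\geq \kappa _1$; on every element lying past a layer's matched element the bounds (\ref{B4})--(\ref{B5}) show the layer and all its derivatives are already of size $e^{-\kappa p}$, so the interpolation error there is again $\lesssim e^{-\kappa p}$. These estimates must be carried out in the weighted norms: for $E_1,\tilde E_1$ the factors $\varepsilon _2^{1/2}$ and $(\varepsilon _1/\varepsilon _2)^{1/2}$ are exactly what balance the blow-up of the $\rho $-derivatives and the element width $\varepsilon _1/\varepsilon _2$ against the amplitudes prescribed in (\ref{B4})--(\ref{B5}), and similarly for $E_2,\tilde E_2$; verifying this balance on each anisotropic, curved element, uniformly in $\varepsilon _1,\varepsilon _2$, is exactly where the weights in the statement earn their place.

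Finally, the remainders $R,\tilde R$ are controlled through (\ref{B6}). The only care needed is that the target quantities are stronger than the energy norm of (\ref{norm}) — they involve $\|\nabla (\cdot )\|_{0}$ of the $w$-component, which does not appear in $|||\cdot |||$ — so I would supplement (\ref{B6}) by a direct interpolation of $R,\tilde R$, whose derivatives are controlled as part of the decomposition, to bound the remaining terms; their contribution is then exponentially small and absorbed into $e^{-\tilde \beta p}$ after choosing $\tilde \beta $ as the minimum of all rates produced above. Assembling the three families of estimates over all elements by the triangle inequality gives the claim. The main obstacle is the boundary-layer step: one must simultaneously resolve two distinct layer scales on two families of anisotropic curved elements, track the analytic factors introduced by the boundary-fitted change of variables, and confirm that the weights $\varepsilon _2^{1/2}$ and $(\varepsilon _1/\varepsilon _2)^{1/2}$ render all the layer contributions uniformly exponentially small.
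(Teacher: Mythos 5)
Your plan coincides with the paper's own proof in all essentials: the same case split along $\kappa p\,\varepsilon_1/\varepsilon_2\gtrless 1/2$, the decomposition (\ref{B2}) with separate treatment of smooth parts, the two layer scales matched to $\Omega_i^{BL_1}$ and $\Omega_i^{BL_2}$ via the interpolants of \cite{MS,MX}, trivial (bilinear) interpolation where (\ref{B4})--(\ref{B5}) show the layers are already of size $e^{-\kappa p}$, and the same bookkeeping of the weights $\varepsilon_2^{1/2}$ and $(\varepsilon_1/\varepsilon_2)^{1/2}$ against element widths and derivative blow-up. The only deviations are minor and, if anything, in your favor: in the asymptotic case the paper interpolates $u$ and $w=\varepsilon_1\Delta u$ directly from the global bound (\ref{B1}) via \cite[Corollary 3.5]{MX} rather than through the decomposition, and it leaves $R,\tilde R$ entirely unapproximated on the strength of (\ref{B6}) alone (handling $L^\infty$ by a brief appeal to Sobolev embedding), whereas you correctly observe that (\ref{B6}) does not control $\|\nabla(w-\pi_p w)\|_{0,\Omega}$ or the $L^\infty$ error of the remainder --- a point the paper glosses over and that your supplementary interpolation step would need, beyond what Assumption \ref{assumption2D} literally provides, to close rigorously.
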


\begin{proof} The proof is separated into two cases.

\textit{Case 1}: $\kappa p\frac{\varepsilon _{1}}{\varepsilon _{2}}\geq 1/2$
(asymptotic case).

In this case we use the asymptotic mesh $\Delta _{A}$ and $u$ satisfies (\ref%
{B1}). From \cite[Corollary 3.5]{MX}, we have $\pi _{p}u\in \mathcal{S}_{0}^{%
{p}}(\Delta _{A})$ such that 
\begin{equation}
\left\Vert u-\pi _{p}u\right\Vert _{\infty ,\Omega }+\left\Vert \nabla
(u-\pi _{p}u)\right\Vert _{\infty ,\Omega }\lesssim p^{2}(\ln
p+1)^{2}e^{-\beta p\kappa }\;,\;\tilde{\beta}\in \mathbb{R}^{+},
\label{aux1}
\end{equation}%
where we used the fact that for $u$ the boundary layers estimate (\ref{B1})
includes an `extra' power of $\frac{\varepsilon _{1}}{\varepsilon _{2}}$.
For $w=\varepsilon _{1}\Delta u$, we have 
\begin{equation*}
\left\Vert D^{\alpha }w\right\Vert _{0,\Omega }\lesssim \varepsilon
_{1}K^{|\alpha |+2}\max \{\left( |\alpha |+2\right) ^{|\alpha
|+2},\varepsilon _{1}^{1-\left( |\alpha |+2\right) }\}\;\;\forall \;|\alpha
|\in \mathbb{N}_{0}^{2}.
\end{equation*}%
and by \cite[Corollary 3.5]{MX}, there exists $\pi _{p}w\in \mathcal{S}^{{p}%
}(\Delta _{A})$ such that 
\begin{equation}
\left\Vert w-\pi _{p}w\right\Vert _{0,\Omega }+\left\Vert \nabla (w-\pi
_{p}w)\right\Vert _{0,\Omega }\lesssim p^{2}(\ln p+1)^{2}e^{-\beta p\kappa }.
\label{aux2}
\end{equation}%
This gives the result in the asymptotic case, once we absorb the powers of $%
p $ in the exponential term and adjust the constants. (Actucally, the proven
result is stronger than the Lemma's assertion.)

\textit{Case 2}: $\kappa p\frac{\varepsilon _{1}}{\varepsilon _{2}}<1/2$
(pre-asymptotic case).

In this case we use the \emph{Spectral Boundary Layer} mesh $\Delta
_{BL}(\kappa ,p)$ and $u$ is decomposed as in (\ref{B2}). Each component is
approximated separately, except for the remainders $R,\tilde{R}$, which are
already exponentially small (hence not approximated at all). The
approximations for the smooth parts $S,\tilde{S}$ are constructed as in Case
1 above (basically taken to be that of \cite{MS}) and estimates like (\ref%
{aux1}) may be obtained; the details are omitted. For the boundary layers,
we proceed similarly to Lemma 3.4 in \cite{MX}. So we only present the
arguments for the approximation of the layers. For $E_{1}$ we have that (\ref%
{B4}) holds, which gives%
\begin{equation*}
\left\vert \frac{\partial ^{n+m}E_{1}(\rho ,\theta )}{\partial \rho
^{n}\partial \theta ^{m}}\right\vert \lesssim K_{1}^{n}\frac{1}{\varepsilon
_{2}}\left( \frac{\varepsilon _{1}}{\varepsilon _{2}}\right)
^{1-n}e^{-2\kappa p}\;\;\forall \;(\rho ,\theta )\in \Omega \backslash
\left\{ \cup _{i=1}^{N_{1}}\Omega _{i}^{BL_{1}}\right\} .
\end{equation*}%
The above estimate, which shows that $E_{1}$ is exponentially small outside
the region $\cup _{i=1}^{N_{1}}\Omega _{i}^{BL_{1}}$, allows us to
approximate $E_{1}$ by its bilinear interpolant there. Inside this region,
we use the interpolant $\pi _{p}$ of \cite[Corollary 3.5]{MX}, to get%
\begin{equation*}
\left\Vert \nabla (E_{1}-\pi _{p}E_{1})\right\Vert _{0,\Omega
_{i}^{BL_{1}}}\lesssim K_{1}\frac{1}{\varepsilon _{2}}\kappa p\frac{%
\varepsilon _{1}}{\varepsilon _{2}}p^{2}(\ln p+1)^{2}e^{-\beta p\kappa
}\lesssim e^{-\tilde{\beta}p}.
\end{equation*}%
Similarly, for $E_{2}$ we have from (\ref{B4}),%
\begin{equation*}
\left\vert \frac{\partial ^{n+m}E_{2}(\rho ,\theta )}{\partial \rho
^{n}\partial \theta ^{m}}\right\vert \lesssim K_{2}^{n}\varepsilon
_{2}^{-n}e^{-2\kappa p}\;\;\forall \;(\rho ,\theta )\in \Omega \backslash
\left\{ \cup _{i=1}^{N_{1}}\Omega _{i}^{BL_{1}}\cup _{j=1}^{N_{1}}\Omega
_{j}^{BL_{2}}\right\} ,
\end{equation*}%
hence, $E_{2}$ will be approximated by its bilinear interpolant in $\Omega
\backslash \left\{ \cup _{i=1}^{N_{1}}\Omega _{i}^{BL_{1}}\cup
_{j=1}^{N_{1}}\Omega _{j}^{BL_{2}}\right\} $. It remains to approximate $%
E_{2}$ in $\cup _{i=1}^{N_{1}}\Omega _{i}^{BL_{1}}$ and $\cup
_{j=1}^{N_{1}}\Omega _{j}^{BL_{2}}.$ From \cite[Corollary 3.5]{MX},%
\begin{equation*}
\left\Vert \nabla (E_{2}-\pi _{p}E_{2})\right\Vert _{0,\Omega
_{i}^{BL_{1}}}\lesssim K_{1}\frac{1}{\varepsilon _{2}}\left( \kappa p\frac{%
\varepsilon _{1}}{\varepsilon _{2}}\right) ^{1/2}p^{2}(\ln p+1)^{2}e^{-\beta
p\kappa }\lesssim e^{-\tilde{\beta}p}
\end{equation*}%
and%
\begin{equation*}
\left\Vert \nabla (E_{2}-\pi _{p}E_{2})\right\Vert _{0,\Omega
_{j}^{BL_{2}}}\lesssim K_{1}\frac{1}{\varepsilon _{2}}\kappa p\left(
\varepsilon _{2}-\frac{\varepsilon _{1}}{\varepsilon _{2}}\right)
^{1/2}p^{2}(\ln p+1)^{2}e^{-\beta p\kappa }\lesssim \varepsilon
_{2}^{-1/2}e^{-\tilde{\beta}p}.
\end{equation*}%
For $\tilde{E}_{1}$ we have%
\begin{equation*}
\left\vert \frac{\partial ^{n+m}\tilde{E}_{1}(\rho ,\theta )}{\partial \rho
^{n}\partial \theta ^{m}}\right\vert \lesssim \tilde{K}_{1}^{n}\left( \frac{%
\varepsilon _{1}}{\varepsilon _{2}}\right) ^{-n}e^{-2\kappa p}\;\;\forall
\;(\rho ,\theta )\in \Omega \backslash \left\{ \cup _{i=1}^{N_{1}}\Omega
_{i}^{BL_{1}}\right\} ,
\end{equation*}%
and%
\begin{equation*}
\left\Vert \nabla (\tilde{E}_{1}-\pi _{p}\tilde{E}_{1})\right\Vert
_{0,\Omega _{i}^{BL_{1}}}\lesssim \left( \frac{\varepsilon _{1}}{\varepsilon
_{2}}\right) ^{-1}\left( \frac{\varepsilon _{1}}{\varepsilon _{2}}\right)
^{1/2}\kappa p\tilde{K}_{1}e^{-2\kappa p}\lesssim \left( \frac{\varepsilon
_{1}}{\varepsilon _{2}}\right) ^{-1/2}e^{-\tilde{\beta}p}
\end{equation*}%
For $\tilde{E}_{2}$ we have%
\begin{equation*}
\left\vert \frac{\partial ^{n+m}\tilde{E}_{2}(\rho ,\theta )}{\partial \rho
^{n}\partial \theta ^{m}}\right\vert \lesssim \tilde{K}_{1}^{n}\varepsilon
_{2}^{-n}e^{-\rho /\varepsilon _{2}}\;\;\forall \;(\rho ,\theta )\in \Omega
\backslash \left\{ \cup _{i=1}^{N_{1}}\Omega _{i}^{BL_{2}}\right\} ,
\end{equation*}%
and%
\begin{equation*}
\left\Vert \nabla (\tilde{E}_{2}-\pi _{p}\tilde{E}_{2})\right\Vert
_{0,\Omega _{i}^{BL_{1}}}\lesssim \tilde{K}_{1}\frac{1}{\varepsilon _{2}}%
\left( \kappa p\varepsilon _{2}\right) ^{1/2}p^{2}(\ln p+1)^{2}e^{-\beta
p\kappa }\lesssim \varepsilon _{2}^{-1/2}e^{-\tilde{\beta}p}.
\end{equation*}%
Combining the above gives the result, once we use the Sobolev embedding
theorem to handle the $L^{\infty }$ bounds.

\end{proof}

The previous lemma allows us to measure the error between the solution $%
(u,w) $ and its interpolant $(\pi _{p}u,\pi _{p}w)$. The following one
allows us to measure the error between the interpolant and the finite
element solution $(u_{N},w_{N})$.

\begin{lemma}
\label{lemma_tool}Assume \ref{assumption2D} holds and let $\left(
u_{N},w_{N}\right) \in \mathcal{S}_{0}^{{p}}(\Delta _{BL}(\kappa ,p))\times 
\mathcal{S}^{{p}}(\Delta _{BL}(\kappa ,p))$ be the solution to (\ref%
{discrete2}). Then there exist polynomials $\pi _{p}u\in \mathcal{S}_{0}^{{p}%
}(\Delta _{BL}(\kappa ,p))$, $\pi _{p}w\in \mathcal{S}^{{p}}(\Delta
_{BL}(\kappa ,p))$ such that 
\begin{equation*}
|||\left( \pi _{p}u-u_{N},\pi _{p}w-w_{N}\right) |||^{2}\lesssim e^{-\tilde{%
\beta p}},
\end{equation*}%
with $\tilde{\beta}>0$ a constant independent of $\varepsilon $ and $p$.
\end{lemma}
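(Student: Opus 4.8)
The plan is to run a Strang/Céa-type argument built from three ingredients already available: the coercivity (\ref{coercive}) of $\mathcal{A}$, Galerkin orthogonality, and the interpolation estimates of Lemma \ref{2Dapprox}. Set $\chi:=\pi_pu-u_N\in\mathcal{S}_0^p(\Delta_{BL})$ and $\xi:=\pi_pw-w_N\in\mathcal{S}^p(\Delta_{BL})$, with $\pi_pu,\pi_pw$ the interpolants produced by Lemma \ref{2Dapprox}, so that $(\chi,\xi)$ is an admissible discrete test pair. By coercivity, $|||(\chi,\xi)|||^2\lesssim\mathcal{A}((\chi,\xi),(\chi,\xi))$. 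Writing $(\chi,\xi)=(\pi_pu-u,\pi_pw-w)+(u-u_N,w-w_N)$ and applying Galerkin orthogonality (which annihilates the second summand, since $(\chi,\xi)$ lies in the discrete space), I reduce the problem to
\[
|||(\chi,\xi)|||^2\lesssim\mathcal{A}\big((\pi_pu-u,\pi_pw-w),(\chi,\xi)\big).
\]
It then suffices to bound the right-hand side by $e^{-\tilde\beta p}\,|||(\chi,\xi)|||$ and divide once through by $|||(\chi,\xi)|||$.

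Abbreviating $\eta_u:=\pi_pu-u$, $\eta_w:=\pi_pw-w$, I would expand $\mathcal{A}((\eta_u,\eta_w),(\chi,\xi))$ via (\ref{Auw}) into its five terms and treat four of them by Cauchy--Schwarz, Lemma \ref{2Dapprox}, and the elementary bounds $\varepsilon_2\le 1$, $\varepsilon_1/\varepsilon_2\lesssim 1$ (the latter from (\ref{relation})). Concretely, $\langle c\eta_u,\chi\rangle\lesssim\|\eta_u\|_0\|\chi\|_0$; the term $\varepsilon_2^2\langle\nabla\eta_u,\nabla\chi\rangle$ I split as $(\varepsilon_2^{1/2}\cdot\varepsilon_2^{1/2}\|\nabla\eta_u\|_0)(\varepsilon_2\|\nabla\chi\|_0)$; $\langle\eta_w,\xi\rangle\le\|\eta_w\|_0\|\xi\|_0$; and $-\varepsilon_1\langle\nabla\eta_w,\nabla\chi\rangle$ I split as $((\varepsilon_1/\varepsilon_2)^{1/2}\cdot(\varepsilon_1/\varepsilon_2)^{1/2}\|\nabla\eta_w\|_0)(\varepsilon_2\|\nabla\chi\|_0)$. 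In every case the first factor is $\lesssim e^{-\tilde\beta p}$ by Lemma \ref{2Dapprox} (using $\|\cdot\|_0\lesssim\|\cdot\|_{\infty,\Omega}$ on the bounded domain for the $L^2$ norms of $\eta_u,\eta_w$), while the second factor is $\le|||(\chi,\xi)|||$ by the definition (\ref{norm}) of the energy norm, so these four terms are controlled as required.

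The remaining term $\varepsilon_1\langle\nabla\eta_u,\nabla\xi\rangle$ is the main obstacle: the energy norm (\ref{norm}) controls $\|\xi\|_0$ but not $\|\nabla\xi\|_0$, so a naive Cauchy--Schwarz does not close the estimate. Here I would exploit that $\xi$ is a piecewise polynomial of degree $p$ and invoke an ($hp$, anisotropic) inverse inequality on $\Delta_{BL}$, working element by element and using the maximal inverse factor $\max_i p^2/h_i$ over the mesh. The thinnest elements $\Omega_i^{BL_1}$, of width $\kappa p\,\varepsilon_1/\varepsilon_2$ in $\rho$, give the worst factor $p^2/(\kappa p\,\varepsilon_1/\varepsilon_2)=p\varepsilon_2/(\kappa\varepsilon_1)$, so that $\|\nabla\xi\|_0\lesssim \tfrac{p\varepsilon_2}{\kappa\varepsilon_1}\|\xi\|_0$ (the analytic maps with $J,J^{-1}\lesssim 1$ transfer the reference inverse estimate with controlled constants). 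Combining with $\|\nabla\eta_u\|_0\lesssim\varepsilon_2^{-1/2}e^{-\tilde\beta p}$ from Lemma \ref{2Dapprox}, the prefactor $\varepsilon_1$ cancels the $\varepsilon_1^{-1}$ hidden in the ratio $\varepsilon_2/\varepsilon_1$, leaving
\[
\varepsilon_1\|\nabla\eta_u\|_0\|\nabla\xi\|_0\lesssim \tfrac{p\,\varepsilon_2^{1/2}}{\kappa}\,e^{-\tilde\beta p}\,\|\xi\|_0\lesssim e^{-\hat\beta p}\,|||(\chi,\xi)|||,
\]
where $\varepsilon_2^{1/2}\le 1$, $\kappa$ is fixed, and the polynomial factor $p$ is absorbed into the exponential at the cost of a marginally smaller rate $\hat\beta$.

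Summing the five contributions and cancelling one power of $|||(\chi,\xi)|||$ then yields $|||(\chi,\xi)|||\lesssim e^{-\hat\beta p}$, hence $|||(\chi,\xi)|||^2\lesssim e^{-\hat\beta p}$, which is the assertion after renaming $\hat\beta$ as $\tilde\beta$. I expect the delicate point to be precisely the non-symmetric pair $\pm\varepsilon_1\langle\nabla\cdot,\nabla\cdot\rangle$: these cancel harmlessly in the coercivity step, but in the consistency step the $\nabla\xi$-term escapes the energy norm and must be tamed by the inverse inequality. This is exactly where the geometry of the Spectral Boundary Layer mesh (the $O(p\,\varepsilon_1/\varepsilon_2)$ element width) together with the parameter relation (\ref{relation}) enter, ensuring that the negative powers of the perturbation parameters produced by the inverse estimate are absorbed.
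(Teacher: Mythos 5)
Your proposal is correct and follows essentially the same route as the paper's proof: coercivity plus Galerkin orthogonality reduce the task to bounding $\mathcal{A}\left( (\pi _{p}u-u,\pi _{p}w-w),(\chi ,\xi )\right)$, the five terms are handled by Cauchy--Schwarz, Lemma \ref{2Dapprox} and the weighted splittings you describe, and the one problematic term $\varepsilon _{1}\left\langle \nabla (\pi _{p}u-u),\nabla \xi \right\rangle$ (the paper's $I_{4}$) is tamed by the identical $hp$ inverse inequality on the thinnest elements of width $\kappa p\frac{\varepsilon _{1}}{\varepsilon _{2}}$, with the resulting factor $p\,\varepsilon _{2}^{1/2}/\kappa$ absorbed into the exponential. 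You isolate exactly the same delicate point and resolve it exactly as the authors do, so there is nothing to add.
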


\begin{proof}

Recall that the bilinear form $\mathcal{A}\left( (\cdot ,\cdot ),(\cdot
,\cdot )\right) $, given by (\ref{Auw}) is coercive (see eq. (\ref{coercive}%
)), hence we have with $\psi =\pi _{p}u-u_{N}\in S_{0}^{p}(\Delta
_{BL}(\kappa ,p))$ and $\phi =\pi _{p}w-w_{N}\in \mathcal{S}^{p}(\Delta
_{BL}(\kappa ,p)),$ 
\begin{eqnarray*}
|||(\psi ,\phi )|||^{2} &\leq &\mathcal{A}\left( (\pi _{p}u-u,\pi
_{p}w-w),(\psi ,\phi )\right) =\left\langle c\left( \pi _{p}u-u\right) ,\psi
\right\rangle _{\Omega }+\varepsilon _{2}^{2}\left\langle \nabla \left( \pi
_{p}u-u\right) ,\nabla \psi \right\rangle _{\Omega } \\
&&+\left\langle \pi _{p}w-w,\phi \right\rangle _{\Omega }+\varepsilon
_{1}\left\langle \nabla \left( \pi _{p}u-u\right) ,\nabla \phi \right\rangle
_{\Omega }-\varepsilon _{1}\left\langle \nabla \left( \pi _{p}w-w\right)
,\nabla \psi \right\rangle _{\Omega } \\
&=:&I_{1}+I_{2}+I_{3}+I_{4}+I_{5}
\end{eqnarray*}%
Each term is treated using Cauchy-Schwarz and Lemma \ref{2Dapprox}, except
for $I_{4}$ which also requires the use of an inverse inequality:%
\begin{eqnarray*}
\left\vert I_{1}\right\vert  &=&\left\vert \left\langle c\left( \pi
_{p}u-u\right) ,\psi \right\rangle _{\Omega }\right\vert \lesssim \left\Vert
\pi _{p}u-u\right\Vert _{0,\Omega }\left\Vert \psi \right\Vert _{0,\Omega
}\lesssim e^{-\tilde{\beta}p}\left\Vert \psi \right\Vert _{0,\Omega }, \\
\left\vert I_{2}\right\vert  &=&\left\vert \varepsilon _{2}^{2}\left\langle
\nabla \left( \pi _{p}u-u\right) ,\nabla \psi \right\rangle _{\Omega
}\right\vert \lesssim \varepsilon _{2}^{2}\left\Vert \nabla \left( \pi
_{p}u-u\right) \right\Vert _{0,\Omega }\left\Vert \nabla \psi \right\Vert
_{0,\Omega } \\
&\lesssim &\varepsilon _{2}^{3/2}e^{-\tilde{\beta}p}\left\Vert \nabla \psi
\right\Vert _{0,\Omega } \\
\left\vert I_{3}\right\vert  &=&\left\vert \left\langle \pi _{p}w-w,\phi
\right\rangle _{\Omega }\right\vert \leq \left\Vert \pi _{p}u-u\right\Vert
_{0,\Omega }\left\Vert \phi \right\Vert _{0,\Omega }\lesssim e^{-\tilde{\beta%
}p}\left\Vert \phi \right\Vert _{0,\Omega } \\
\left\vert I_{4}\right\vert  &=&\left\vert \varepsilon _{1}\left\langle
\nabla \left( \pi _{p}u-u\right) ,\nabla \phi \right\rangle _{\Omega
}\right\vert \leq \varepsilon _{1}\left\Vert \nabla \left( \pi
_{p}u-u\right) \right\Vert _{0,\Omega }\left\Vert \nabla \phi \right\Vert
_{0,\Omega } \\
&\lesssim &\varepsilon _{1}\left\Vert \nabla \left( \pi _{p}u-u\right)
\right\Vert _{0,\Omega }(\kappa p\frac{\varepsilon _{1}}{\varepsilon _{2}}%
)^{-1}p^{2}\left\Vert \phi \right\Vert _{0,\Omega }\lesssim \varepsilon
_{1}\varepsilon _{2}^{-1/2}(\kappa p\frac{\varepsilon _{1}}{\varepsilon _{2}}%
)^{-1}e^{-\tilde{\beta}p}\left\Vert \phi \right\Vert _{0,\Omega } \\
&\lesssim &\varepsilon _{2}^{1/2}e^{-\tilde{\beta}p}\left\Vert \phi
\right\Vert _{0,\Omega } \\
\left\vert I_{5}\right\vert  &=&\left\vert \varepsilon _{1}\left\langle
\nabla \left( \pi _{p}w-w\right) ,\nabla \psi \right\rangle _{\Omega
}\right\vert \leq \varepsilon _{1}\left\Vert \nabla \left( \pi
_{p}w-w\right) \right\Vert _{0,\Omega }\left\Vert \nabla \psi \right\Vert
_{0,\Omega } \\
&\lesssim &\varepsilon _{1}\left( \frac{\varepsilon _{1}}{\varepsilon _{2}}%
\right) ^{-1/2}e^{-\tilde{\beta}p}\left\Vert \nabla \psi \right\Vert
_{0,\Omega } \\
&\lesssim &(\varepsilon _{1}\varepsilon _{2})^{1/2}e^{-\tilde{\beta}%
p}\left\Vert \nabla \psi \right\Vert _{0,\Omega }.
\end{eqnarray*}%
Hence, 
\begin{equation*}
|||(\psi ,\phi )|||^{2}\lesssim e^{-\tilde{\beta p}}\left( \left\Vert \psi
\right\Vert _{0,\Omega }+\left\Vert \nabla \psi \right\Vert _{0,\Omega
}+\left\Vert \phi \right\Vert _{0,\Omega }\right) \lesssim e^{-\tilde{\beta p%
}}|||(\psi ,\phi )|||
\end{equation*}%
and the proof is complete.

\end{proof}

We now present our main result.

\begin{theorem}
\label{thm_main} Let $(u,w)\in H_{0}^{1}(\Omega )\times H^{1}(\Omega ),$ $%
\left( u_{N},w_{N}\right) \in \mathcal{S}_{0}^{{p}}(\Delta _{BL}(\kappa
,p))\times \mathcal{S}^{{p}}(\Delta _{BL}(\kappa ,p))$ be the solutions to (%
\ref{mixed}) and (\ref{discrete2}) respectively, and suppose Assumption \ref%
{assumption2D} holds. Then there exists a positive constant $\tilde{\beta}$,
independent of $\varepsilon ,$ such that 
\begin{equation*}
|||\left( u-u_{N},w-w_{N}\right) |||\lesssim e^{-\tilde{\beta}p}.
\end{equation*}
\end{theorem}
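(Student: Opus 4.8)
The plan is to decompose the total error by the triangle inequality, inserting the interpolant $(\pi_p u,\pi_p w)$ supplied jointly by Lemma \ref{2Dapprox} and Lemma \ref{lemma_tool}:
\[
|||(u-u_N,w-w_N)||| \le |||(u-\pi_p u,w-\pi_p w)||| + |||(\pi_p u-u_N,\pi_p w-w_N)|||.
\]
The second summand on the right is exactly the quantity estimated in Lemma \ref{lemma_tool}, which bounds its square by $e^{-\tilde\beta p}$ and hence the norm itself by a constant times $e^{-\tilde\beta p/2}$. So the only remaining work is to control the first (approximation) term through Lemma \ref{2Dapprox}.

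To do this I would expand the approximation term using the definition \eqref{norm} of the energy norm,
\[
|||(u-\pi_p u,w-\pi_p w)|||^2 = \|u-\pi_p u\|_0^2 + \varepsilon_2^2\|\nabla(u-\pi_p u)\|_0^2 + \|w-\pi_p w\|_0^2,
\]
and estimate each of the three contributions separately. For the two $L^2$ terms in $u$ and $w$ I would convert the $L^\infty$ bounds of Lemma \ref{2Dapprox} into $L^2$ bounds via the boundedness of $\Omega$, namely $\|v\|_{0,\Omega}\le|\Omega|^{1/2}\|v\|_{\infty,\Omega}$, obtaining $\|u-\pi_p u\|_0\lesssim e^{-\tilde\beta p}$ and $\|w-\pi_p w\|_0\lesssim e^{-\tilde\beta p}$. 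For the gradient term I would invoke the second estimate of the lemma, $\varepsilon_2^{1/2}\|\nabla(u-\pi_p u)\|_0\lesssim e^{-\tilde\beta p}$, and use $\varepsilon_2\le1$ (so $\varepsilon_2^2\le\varepsilon_2$) to write $\varepsilon_2^2\|\nabla(u-\pi_p u)\|_0^2\le\varepsilon_2\|\nabla(u-\pi_p u)\|_0^2\lesssim e^{-2\tilde\beta p}$.

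Summing the three bounds yields $|||(u-\pi_p u,w-\pi_p w)|||\lesssim e^{-\tilde\beta p}$. Feeding this together with the Lemma \ref{lemma_tool} bound into the triangle inequality, and relabelling the exponential constant to absorb the slower of the two rates ($e^{-\tilde\beta p/2}$), delivers the asserted estimate $|||(u-u_N,w-w_N)|||\lesssim e^{-\tilde\beta p}$.

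I do not expect a genuine obstacle at this stage: the substantive analysis has already been discharged into the two preceding lemmas, Lemma \ref{2Dapprox} (exponential approximability of the layer decomposition \eqref{B2} on the Spectral Boundary Layer mesh) and Lemma \ref{lemma_tool} (which rests on coercivity \eqref{coercive} and Galerkin orthogonality). What remains here is only the bookkeeping of reconciling the scaled norms, the single mild point being the mismatch between the factor $\varepsilon_2^2$ appearing in the energy norm and the factor $\varepsilon_2$ available from the approximation lemma, which is resolved cleanly by $\varepsilon_2\le1$.
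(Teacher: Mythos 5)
Your proof is correct and takes essentially the same route as the paper: the paper's proof of Theorem \ref{thm_main} is precisely the triangle inequality inserting the interpolant $(\pi_p u,\pi_p w)$, followed by an appeal to Lemmas \ref{2Dapprox} and \ref{lemma_tool}. Your extra bookkeeping (converting the $L^\infty$ bounds to $L^2$ via $|\Omega|^{1/2}$, using $\varepsilon_2\le 1$ to reconcile $\varepsilon_2^2$ with $\varepsilon_2$, and relabelling $\tilde{\beta}$ to absorb the $e^{-\tilde{\beta}p/2}$ rate coming from the squared bound in Lemma \ref{lemma_tool}) simply makes explicit details the paper leaves implicit.
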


\begin{proof} The triangle inequality gives 
\begin{equation*}
|||\left( u-u_{N},w-w_{N}\right) |||\leq |||\left( u-\pi _{p}u,w-\pi
_{p}w\right) |||+|||\left( \pi _{p}u-u_{N},\pi _{p}w-w_{N}\right) |||
\end{equation*}%
and we then use Lemmas \ref{2Dapprox} and \ref{lemma_tool}. \end{proof}

\subsection{A balanced norm}

As is well known (see, e.g., \cite{MX} and the references therein) the
energy norm is too weak and `does not see the layers', since for $%
u=S+E_{1}+E_{2}$, with $S$ the smooth part, $E_{1}$ the faster decaying
layer component and $E_{2}$ the slower decaying layer component, there holds%
\begin{equation*}
|||\left( S,\varepsilon _{1}\Delta S\right) |||^{2}\lesssim \varepsilon
_{1}^{2}+\varepsilon _{2}^{2}+1=O(1),
\end{equation*}%
\begin{equation*}
|||\left( E_{1},\varepsilon _{1}\Delta E_{1}\right) |||^{2}\lesssim 1+\frac{%
\varepsilon _{1}}{\varepsilon _{2}}+\frac{\varepsilon _{1}^{2}}{\varepsilon
_{2}^{3}}=O\left( \frac{\varepsilon _{1}}{\varepsilon _{2}}\right) ,
\end{equation*}%
\begin{equation*}
|||\left( E_{2},\varepsilon _{1}\Delta E_{2}\right) |||^{2}\lesssim \frac{%
\varepsilon _{1}^{2}}{\varepsilon _{2}^{3}}+\varepsilon _{2}=O\left(
\varepsilon _{2}\right) .
\end{equation*}%
Hence, as $\frac{\varepsilon _{1}}{\varepsilon _{2}},\varepsilon
_{2}\rightarrow 0$ the norm of the layer components tends to 0, which
manifests itself as `the method performing better as $\frac{\varepsilon _{1}%
}{\varepsilon _{2}},\varepsilon _{2}\rightarrow 0$' (see Section \ref{nr}).

The norm%
\begin{equation}
|||\left( u,w\right) |||_{B}^{2}:=\frac{\varepsilon _{2}}{\varepsilon _{1}}%
\left\Vert w\right\Vert _{0,\Omega }^{2}+\varepsilon _{2}\left\Vert \nabla
u\right\Vert _{0,\Omega }^{2}+\left\Vert u\right\Vert _{0,\Omega }^{2},
\label{balanced}
\end{equation}%
is balanced, since%
\begin{equation*}
|||\left( S,\varepsilon _{1}\Delta S\right) |||_{B}^{2}\lesssim \varepsilon
_{1}\varepsilon _{2}+\varepsilon _{2}+1=O(1),
\end{equation*}%
\begin{equation*}
|||\left( E_{1},\varepsilon _{1}\Delta E_{1}\right) |||_{B}^{2}\lesssim 1+%
\frac{\varepsilon _{1}}{\varepsilon _{2}^{2}}+\frac{\varepsilon _{1}^{2}}{%
\varepsilon _{2}^{3}}=O\left( 1\right) ,
\end{equation*}%
\begin{equation*}
|||\left( E_{2},\varepsilon _{1}\Delta E_{2}\right) |||_{B}^{2}\lesssim 
\frac{\varepsilon _{1}}{\varepsilon _{2}^{2}}+1+\varepsilon _{2}=O\left(
1\right) .
\end{equation*}%
The problem with (\ref{balanced}) is that the bilinear form is not coercive
with respect to this norm, and the proof of convergence in this stronger
norm remains open. Nevertheless, in Section \ref{nr} we show the results of
numerical computations using this norm as well.

\section{Numerical results}

\label{nr}

We consider the problem 
\begin{align*}
\varepsilon _{1}^{2}\Delta ^{2}u-\varepsilon _{2}^{2}\Delta u+u& =f\text{ in 
}\Omega , \\
u=\frac{\partial u}{\partial n}& =0\text{ on }\partial \Omega ,
\end{align*}%
where $f=10x$ and the domain $\Omega $ is the interior of the so called 
\emph{Cranioid}-curve, given by 
\begin{equation*}
\gamma (\theta )=\left( \frac{1}{4}\sin (\theta )+\frac{1}{2}\sqrt{1-0.9\cos
(\theta )^{2}}+\frac{1}{2}\sqrt{1-0.7\cos (\theta )^{2}}\right) \cdot 
\begin{pmatrix}
\cos (\theta ) \\ 
\sin (\theta )%
\end{pmatrix}%
\end{equation*}%
where $\theta \in \lbrack 0,2\pi )$, see also Figure~\ref{fig:cranioid},
where for rather large values of $\varepsilon _{1}$ and $\varepsilon _{2}$
the mesh is also shown. Parallel to the boundary the two mesh layers
corresponding to the solution decomposition are visible. We use $\kappa =1$
for all our computations.

\begin{figure}[h]
\begin{center}
\includegraphics[width=0.55\textwidth]{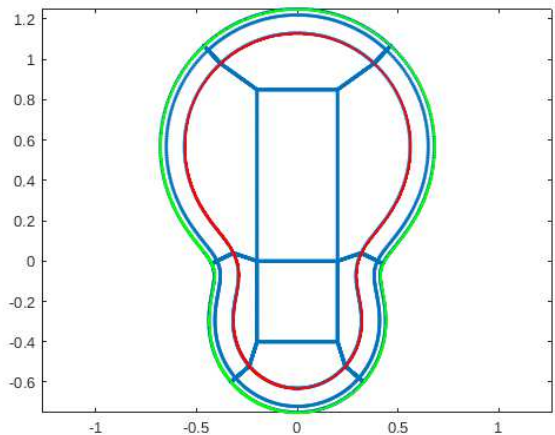}
\end{center}
\caption{The {\emph{Spectral Boundary layer}} mesh for the cranioid domain.}
\label{fig:cranioid}
\end{figure}

The exact solution to this problem is not known. Therefore, we use a
numerically computed reference solution as substitute, computed by
increasing the maximal polynomial degree by 2, adjusting the mesh and
recomputing the corresponding numerical solution. All computations were made
using the finite element library $\mathbb{SOFE}$ (%
\url{https://github.com/SOFE-Developers/SOFE}) running in Matlab/Octave. The
error in the energy norm $|||(u-u_{N},w-w_{N})|||$, will be plotted versus
the polynomial degree $p$, in a semi-log scale.

We look at two simulations: in the first we fix $\varepsilon _{1}=10^{-11}$
and vary $\varepsilon _{2}=10^{-j},j=3,4,5$ and show the results in Figure~%
\ref{fig:eg1} (left). We observe exponential convergence in the energy norm
as solid lines and even in the balanced norm (\ref{balanced}) as dashed
lines. Notice that for the energy norm, the error seems to be getting better
as $\varepsilon _{2}$ tends to $0$, which is a manifestation of the lack of
`balance', as discussed above. Next, we fix $\varepsilon _{2}=10^{-3}$ and
vary $\varepsilon _{1}=10^{-i},i=7,8,9,10$. In Figure~\ref{fig:eg1} (right)
the results are shown in the energy norm as solid lines and the balanced
norm as dashed lines. Again we observe exponential convergence, with all the
lines coinciding (for both norms). This is in agreement with \cite{FR3}
where the balanced norm (\ref{balanced}) includes this extra power of $%
\varepsilon _{2}$. 
\begin{figure}[h]
\begin{center}
\includegraphics[width=0.45\textwidth]{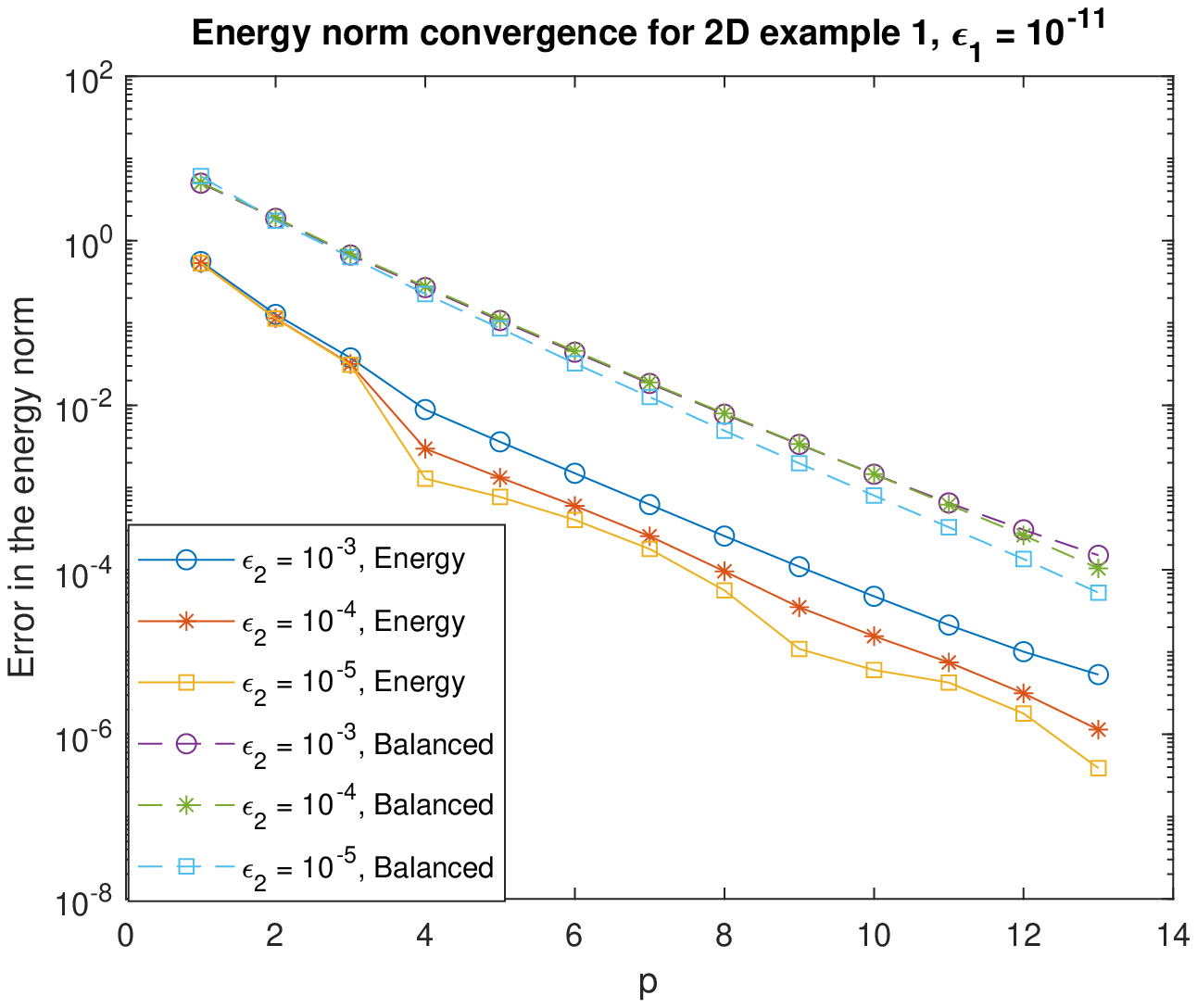} \mbox{} %
\includegraphics[width=0.45\textwidth]{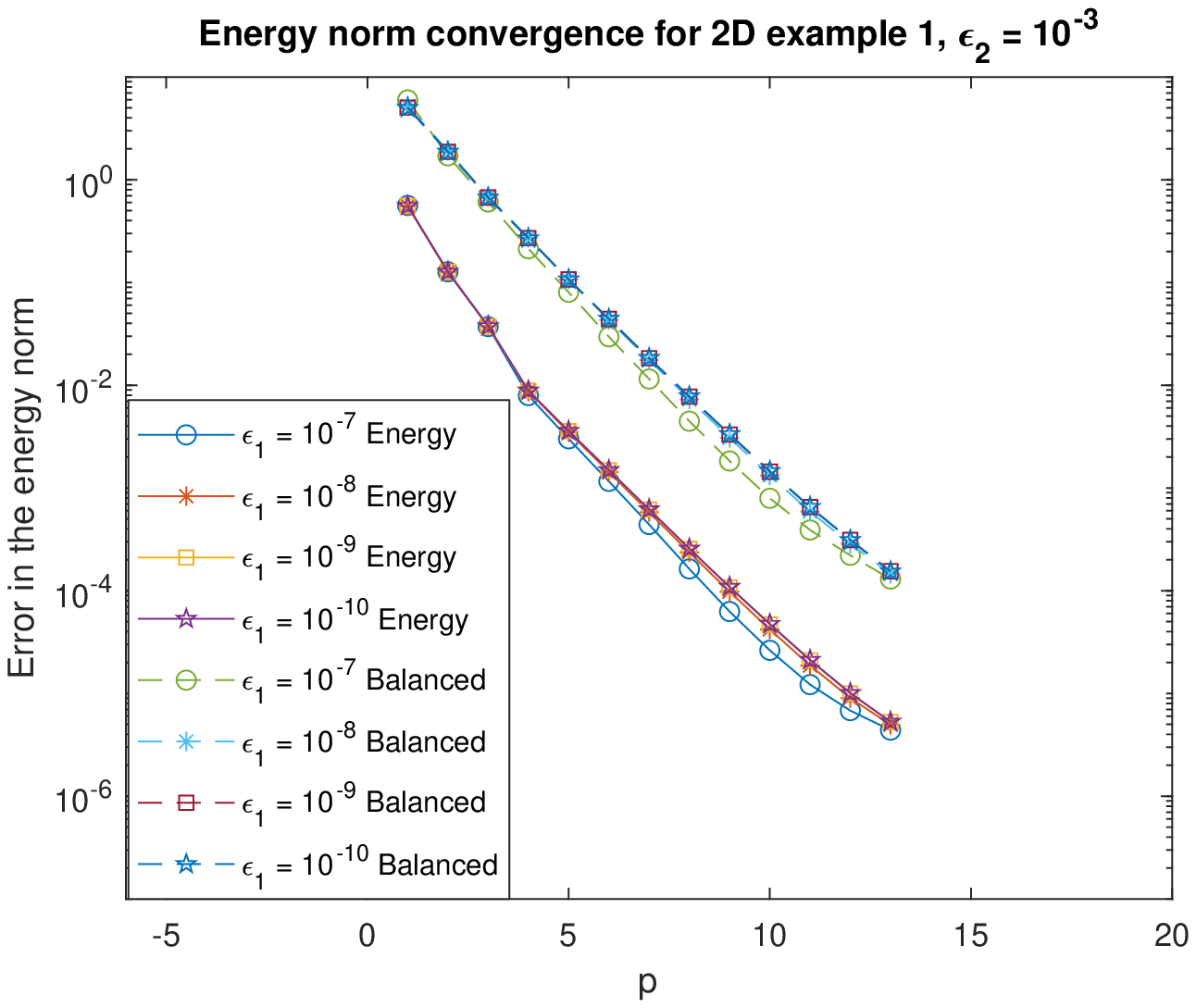}
\end{center}
\caption{Example 1 with $\protect\varepsilon _{1}$ fixed (left), and $%
\protect\varepsilon _{2}$ fixed (right).}
\label{fig:eg1}
\end{figure}
Thus our simulations reflect quite nicely the theoretical findings.
Furthermore, they hint at a stronger convergence result in the balanced
norm, which to prove is an open question.

Finally, as a second example we consider the case when $f$ is given by 
\begin{equation*}
f(x,y)=\frac{1}{\sqrt{(x+1/2)^{2}+y^{2}}}
\end{equation*}
which has a singularity just outside of $\Omega $. This causes the solution
to be less regular and any `lack of balance' phenomena should not be
visible. Figure \ref{fig:eg2} shows the results of this computation. On the
left, $\varepsilon _{1}$ is fixed and on the right $\varepsilon _{2}$ is
fixed, while in both graphs solid lines depict the error in the energy norm
and dashed lines the balanced norm. We observe that in all cases the
exponential convergence is robust with respect to both parameters. 
\begin{figure}[h]
\begin{center}
\includegraphics[width=0.45\textwidth]{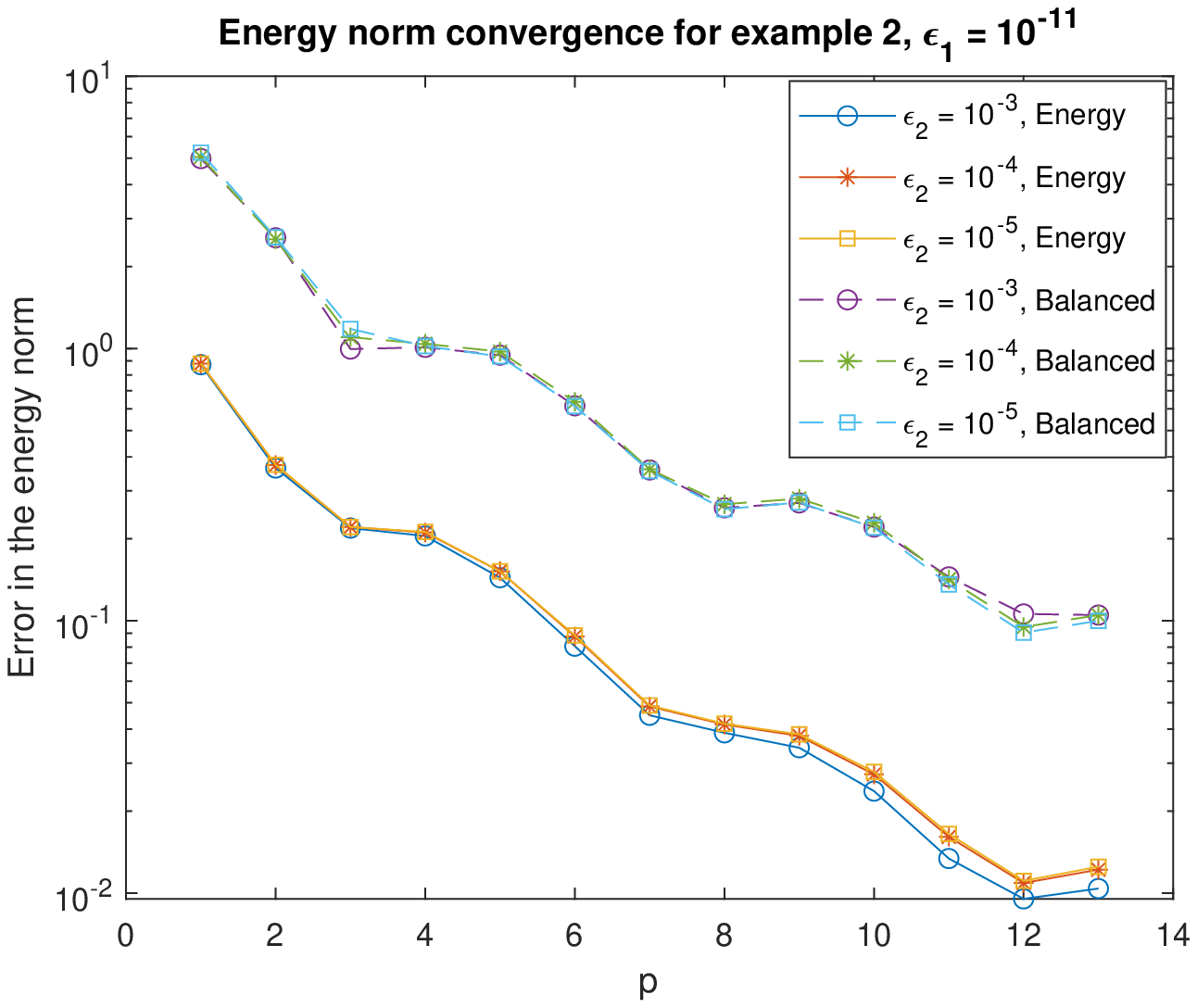} \mbox{} %
\includegraphics[width=0.45\textwidth]{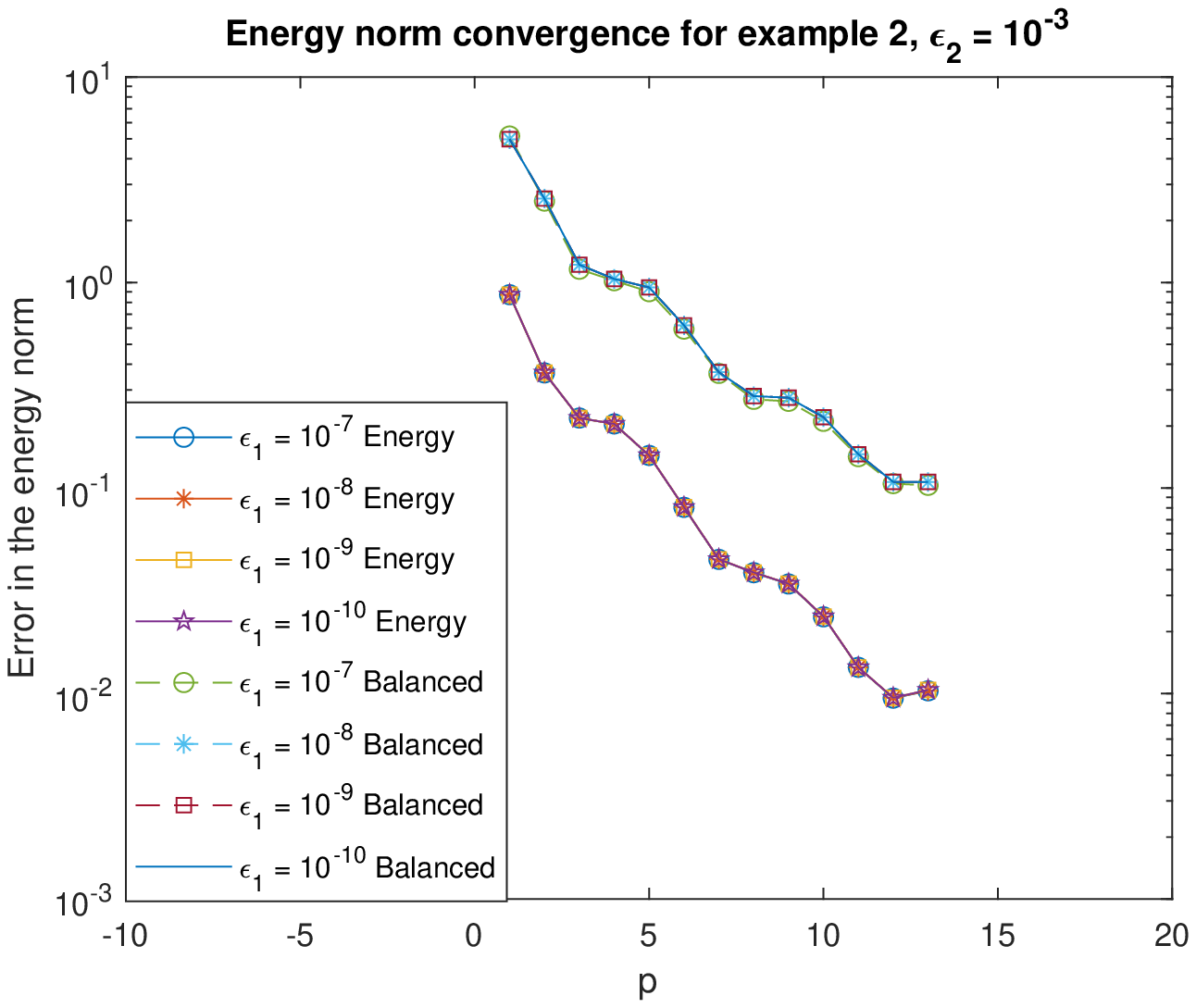}
\end{center}
\caption{Example 2 with $\protect\varepsilon _{1}$ fixed (left), and $%
\protect\varepsilon _{2}$ fixed (right).}
\label{fig:eg2}
\end{figure}

\end{document}